\title{Sur une question de Mehta et Pauly}
\author{Holger Brenner}
\author{Axel St\"abler}
\thanks{The second author was supported by SFB/TRR 45 Bonn-Essen-Mainz financed by Deutsche Forschungsgemeinschaft}
\DeclareMathOperator{\Ext}{Ext}
\newenvironment{abstracts}{%
  \ifx\maketitle\relax
    \ClassWarning{\@classname}{Abstract should precede
      \protect\maketitle\space in AMS document classes; reported}%
  \fi
  \global\setbox\abstractbox=\vtop \bgroup
    \normalfont\Small
    \list{}{\labelwidth\z@
      \leftmargin3pc \rightmargin\leftmargin
      \listparindent\normalparindent \itemindent\z@
      \parsep\z@ \@plus\p@
      
      \itemsep\medskipamount
    }%
}{%
  \endlist\egroup
  \ifx\@setabstract\relax \@setabstracta \fi
}
\newcommand{\abstractin}[1]{%
  \otherlanguage{#1}%
  \item[\hskip\labelsep\scshape\abstractname.]%
}
\begin{document}
\swapnumbers
\theoremstyle{definition}
\newtheorem{Le}{Lemma}[section]
\newtheorem{Def}[Le]{Definition}
\newtheorem*{DefB}{Definition}
\newtheorem{Bem}[Le]{Remark}
\newtheorem{Ko}[Le]{Corollary}
\newtheorem{Theo}[Le]{Theorem}
\newtheorem*{TheoB}{Theorem}
\newtheorem{Bsp}[Le]{Example}
\newtheorem{Be}[Le]{Observation}
\newtheorem{Prop}[Le]{Proposition}
\newtheorem*{PropB}{Proposition}
\newtheorem{Sit}[Le]{Situation}
\newtheorem{Que}[Le]{Question}
\newtheorem{Con}[Le]{Conjecture}
\newtheorem{Dis}[Le]{Discussion}
\newtheorem{Prob}[Le]{Problem}
\newtheorem*{Konv}{Convention}
\def\cocoa{{\hbox{\rm C\kern-.13em o\kern-.07em C\kern-.13em o\kern-.15em
A}}}
\address{Holger Brenner\\ Universit\"at Osnabr\"uck\\ Fachbereich 6\\Albrechtstr.\ 28a\\ 49069 Osnabr\"uck\\ 
Germany}
\email{hbrenner@uni-osnabrueck.de}
\address{Axel St\"abler\\
Johannes Gutenberg-Universit\"at Mainz\\ Fachbereich 8\\
Staudingerweg 9\\
55099 Mainz\\
Germany}

\email{staebler@uni-mainz.de}

\date{\today}
\subjclass[2010]{Primary 14H60}
\begin{abstracts}
\abstractin{french}
Dans cette courte note, nous donnons des exemples explicites en characteristique $p$ sur certaines courbes projectives lisses o\`u, pour un fibr\'{e} vectoriel semi-stable donn\'{e} $\mathcal{E}$, la longeur de la filtration d'Harder-Narasimhan de $F^\ast \mathcal{E}$ est plus grande que $p$. Cela r\'epond negativement \`a une question pos\'ee par Mehta et Pauly dans \cite{mehtapaulysemistabledirectimage}.

\abstractin{english}
In this short note we provide explicit examples in characteristic $p$ on certain smooth projective curves where for a given semistable vector bundle $\mathcal{E}$ the length of the Harder-Narasimhan filtration of $F^\ast \mathcal{E}$ is longer than $p$. This negatively answers a question of Mehta and Pauly raised in \cite{mehtapaulysemistabledirectimage}.
\end{abstracts}

\maketitle
\section*{Introduction}
\selectlanguage{english}

In \cite[page 2]{mehtapaulysemistabledirectimage} Mehta and Pauly asked whether for a smooth projective curve over a field of characteristic $p > 0$ and $\mathcal{E}$ a semistable bundle on $X$ the length of the Harder-Narasimhan filtration of $F^\ast \mathcal{E}$ is at most $p$. In \cite[Construction 2.13]{zhoucounterexampleshnflength} this is answered negatively. Examples are constructed based on a result of Sun (\cite{sunfrobdirectimage}). The bundles for which examples are obtained in \cite{zhoucounterexampleshnflength} have rank $\geq 2p$ (in fact, examples are constructed for any $np$ with $ n \geq 2$) and are over curves of large genus since restriction theorems and Bertini are used. The purpose of this short note is to provide surprisingly simple down to earth examples in characteristic $p$ for certain smooth plane curves and bundles of rank $p +1 \leq r \leq \lfloor \frac{3p+1}{2} \rfloor$. In characteristic $2$ negative examples exist on any smooth projective curve of genus $\geq 2$. We note that our examples are only polystable, while one should be able to obtain stable bundles using the methods outlined in \cite{zhoucounterexampleshnflength}.

\section{The example}

\begin{Prop}
\label{HNF}
Let $X$ be a smooth projective curve over an algebraically closed field $k$ of positive characteristic. Let $\mathcal{E}_i$, $i=1, \ldots, n$ be semistable rank two bundles of slope $\mu$ on $X$ such that the $F^\ast \mathcal{E}_i$ split as $F^\ast \mathcal{E}_i = \mathcal{L}_i \oplus \mathcal{G}_i$ with $\mu(\mathcal{L}_i) > \mu(\mathcal{G}_i)$. Assume moreover, that $\mu(\mathcal{L}_i) > \mu(\mathcal{L}_{i+1})$ for all $i = 1, \ldots, n-1$. Then $\mathcal{S} = \bigoplus_{i=1}^n \mathcal{E}_i $ is semistable and $F^\ast \mathcal{S}$ is unstable and its Harder-Narasimhan filtration is \[0 \subset \mathcal{L}_1 \subset \mathcal{L}_1 \oplus \mathcal{L}_2 \subset \ldots \subset \bigoplus_{i=1}^n \mathcal{L}_i \subset \bigoplus_{i=1}^n \mathcal{L}_i  \oplus \mathcal{G}_n \subset \bigoplus_{i=1}^n \mathcal{L}_i  \oplus \mathcal{G}_n \oplus \mathcal{G}_{n-1} \subset \ldots \subset  F^\ast \mathcal{S}.\] In particular, the Harder-Narasimhan filtration of $F^\ast \mathcal{S}$ has length $2n$.
\end{Prop}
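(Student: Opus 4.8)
The plan is to handle this in three moves: first a short slope computation that upgrades the two hypotheses into a single chain of strict inequalities among all the $\mu(\mathcal{L}_i)$ and $\mu(\mathcal{G}_i)$; then a standard argument for the semistability of $\mathcal{S}$; and finally a verification that the displayed filtration satisfies the two characterizing properties of a Harder--Narasimhan filtration, after which uniqueness finishes everything.

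First I would record the numerology. Each $\mathcal{E}_i$ has rank $2$ and slope $\mu$, hence degree $2\mu$; since the (absolute or relative) Frobenius has degree $p$, the bundle $F^\ast\mathcal{E}_i$ has rank $2$ and degree $2p\mu$, so $\mu(\mathcal{L}_i)+\mu(\mathcal{G}_i)=2p\mu$ for every $i$. Combined with $\mu(\mathcal{L}_i)>\mu(\mathcal{G}_i)$ this forces $\mu(\mathcal{L}_i)>p\mu>\mu(\mathcal{G}_i)$ for all $i$, and combined with $\mu(\mathcal{L}_i)>\mu(\mathcal{L}_{i+1})$ it gives $\mu(\mathcal{G}_i)<\mu(\mathcal{G}_{i+1})$. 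Thus
\[
\mu(\mathcal{L}_1) > \cdots > \mu(\mathcal{L}_n) > p\mu > \mu(\mathcal{G}_n) > \cdots > \mu(\mathcal{G}_1),
\]
so the $\mathcal{G}_i$ are strictly increasing in $i$ and every $\mathcal{L}$-slope exceeds every $\mathcal{G}$-slope; note the $\mathcal{G}_i$ are listed in \emph{reverse} index order, which is exactly the order in which they appear in the claimed filtration.

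For the semistability of $\mathcal{S}$ I would invoke the standard fact that a finite direct sum of semistable bundles of a common slope $\mu$ is semistable of slope $\mu$: if $\mathcal{F}\subseteq\mathcal{S}$ were destabilizing we may replace it by its maximal destabilizing subsheaf and so assume $\mathcal{F}$ semistable of slope $>\mu$; then each composite $\mathcal{F}\hookrightarrow\mathcal{S}\twoheadrightarrow\mathcal{E}_i$ vanishes, since there are no nonzero homomorphisms from a semistable sheaf to a semistable sheaf of strictly smaller slope, whence $\mathcal{F}=0$, a contradiction. Since Frobenius pullback commutes with direct sums, $F^\ast\mathcal{S}=\bigoplus_{i=1}^n(\mathcal{L}_i\oplus\mathcal{G}_i)$, and the displayed filtration is obtained by adjoining these $2n$ line bundles one at a time in the order $\mathcal{L}_1,\dots,\mathcal{L}_n,\mathcal{G}_n,\dots,\mathcal{G}_1$. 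Each successive quotient is one of these line bundles, hence semistable, and by the chain of inequalities above the slopes of the successive quotients are strictly decreasing; by uniqueness of the Harder--Narasimhan filtration this is therefore the HN filtration of $F^\ast\mathcal{S}$. It has $2n$ successive quotients, so length $2n$, and since its first term $\mathcal{L}_1$ has slope $\mu(\mathcal{L}_1)>p\mu=\mu(F^\ast\mathcal{S})$, the bundle $F^\ast\mathcal{S}$ is unstable.

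The only step that needs any care --- the ``main obstacle'', such as it is --- is the slope bookkeeping in the first move: one must use that $\mu(\mathcal{L}_i)+\mu(\mathcal{G}_i)$ is independent of $i$ both to see that $\mathcal{L}_n$ still beats every $\mathcal{G}_j$ and to see that the $\mathcal{G}_i$ occur in reversed order. Once the single chain of strict inequalities is in hand, the rest is a direct appeal to the definition and the uniqueness of the Harder--Narasimhan filtration.
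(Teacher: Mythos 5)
Your proposal is correct and follows essentially the same route as the paper: establish via the constant sum $\mu(\mathcal{L}_i)+\mu(\mathcal{G}_i)$ that the line-bundle quotients of the displayed filtration have strictly decreasing slopes, and then invoke the characterization/uniqueness of the Harder--Narasimhan filtration, since line bundles are automatically semistable. Your version is in fact slightly more careful than the paper's --- you write the correct pivot $2p\mu$ (the paper abbreviates this as $2\mu$) and you spell out the standard argument for the semistability of $\mathcal{S}$, which the paper dismisses with ``clearly''.
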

\begin{proof}
Clearly $\mathcal{S}$ is semistable. We have $\mu(\mathcal{G}_i) = 2\mu - \mu(\mathcal{L}_i)$ which implies $\mu(\mathcal{G}_i) < \mu(\mathcal{G}_{i+1})$ for all $i$. We also have $\mu(\mathcal{L}_i) > \mu(\mathcal{G}_j)$ for all $i,j$. Indeed, we may assume that $i > j$ then $\mu(\mathcal{L}_i) - \mu(\mathcal{G}_j) = \mu(\mathcal{L}_j) - \mu(\mathcal{G}_i)$ and by assumption $\mu(\mathcal{L}_i) > \mu(\mathcal{L}_j) > \mu(\mathcal{G}_j)$. Hence, $\mu(\mathcal{L}_j) > \mu(\mathcal{G}_i)$.

It follows that the slopes of the quotients $\mathcal{Q}_i$ of the filtration form a strictly decreasing sequence. As all $\mathcal{Q}_i$ are semistable as line bundles this is the Harder-Narasimhan filtration of $F^\ast \mathcal{S}$.
\end{proof}

\begin{Bsp}
By \cite[Theorem 1]{langepaulyfrobenius} any smooth projective curve $X$ of genus $\geq 2$ admits a semistable rank two bundle $\mathcal{E}$ with trivial determinant such that $F^\ast \mathcal{E}$ is not semistable. Then $\mathcal{S} = \mathcal{E} \oplus \mathcal{O}_X$ is a semistable vector bundle and the Harder-Narasimhan filtration of $F^\ast \mathcal{S}$ has length $3 > 2$. Indeed, if $0 \subset \mathcal{L} \subset F^\ast \mathcal{E}$ is a Harder-Narasimhan filtration of $F^\ast \mathcal{E}$ then $0 \subset \mathcal{L} \subset \mathcal{L} \oplus \mathcal{O}_X \subset F^\ast \mathcal{S}$ is one for $F^\ast \mathcal{S}$.
\end{Bsp}

\begin{Le}
\label{Lemma}
Let $X$ be a smooth projective curve and $\mathcal{E}$ a rank $2$ vector bundle on $X$. If $\mathcal{E}$ is given by an extension $0 \neq c \in \Ext^1(\mathcal{M}, \mathcal{L})$ with $\deg \mathcal{L} < \deg \mathcal{M}$ and $F^\ast (c) = 0$ then $\mathcal{E}$ is semistable.
\end{Le}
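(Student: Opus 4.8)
The plan is to argue by contradiction. Suppose $\mathcal{E}$ is not semistable and let $\mathcal{N}\subset\mathcal{E}$ be the maximal destabilizing subbundle, i.e.\ the first step of the Harder-Narasimhan filtration of $\mathcal{E}$; since $\mathcal{E}$ has rank two, $\mathcal{N}$ is a line bundle with $\deg\mathcal{N}>\mu(\mathcal{E})=\tfrac{1}{2}(\deg\mathcal{L}+\deg\mathcal{M})$, and $\mathcal{N}$ is a subbundle, so $\mathcal{E}/\mathcal{N}$ is a line bundle (of degree $<\mu(\mathcal{E})$). First I would describe the Harder-Narasimhan filtration of $F^\ast\mathcal{E}$ in two ways. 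On one hand, Frobenius on a smooth curve over an algebraically closed field of characteristic $p$ is finite flat of degree $p$, so $F^\ast$ is exact and multiplies the degree of a line bundle by $p$; hence $0\subset F^\ast\mathcal{N}\subset F^\ast\mathcal{E}$ is a filtration by subbundles whose quotients $F^\ast\mathcal{N}$ and $F^\ast(\mathcal{E}/\mathcal{N})$ are line bundles, hence semistable, and have strictly decreasing slopes; so it is the Harder-Narasimhan filtration of $F^\ast\mathcal{E}$. On the other hand, $F^\ast(c)=0$ means the pulled-back extension $0\to F^\ast\mathcal{L}\to F^\ast\mathcal{E}\to F^\ast\mathcal{M}\to 0$ splits, realising $F^\ast\mathcal{M}$ as a subbundle of $F^\ast\mathcal{E}$ with quotient $F^\ast\mathcal{L}$; since $\deg F^\ast\mathcal{M}=p\deg\mathcal{M}>p\deg\mathcal{L}=\deg F^\ast\mathcal{L}$, the same reasoning shows that $0\subset F^\ast\mathcal{M}\subset F^\ast\mathcal{E}$ is also the Harder-Narasimhan filtration of $F^\ast\mathcal{E}$.

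By uniqueness of the Harder-Narasimhan filtration these coincide, so $F^\ast\mathcal{N}=F^\ast\mathcal{M}$ as subbundles of $F^\ast\mathcal{E}$; in particular $p\deg\mathcal{N}=p\deg\mathcal{M}$, i.e.\ $\deg\mathcal{N}=\deg\mathcal{M}$. To conclude I would consider the composite $\varphi\colon\mathcal{N}\hookrightarrow\mathcal{E}\to\mathcal{M}$. It cannot be zero, for otherwise $\mathcal{N}\subseteq\mathcal{L}$ and then $\deg\mathcal{N}\le\deg\mathcal{L}<\mu(\mathcal{E})$, contradicting $\deg\mathcal{N}>\mu(\mathcal{E})$. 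So $\varphi$ is a nonzero morphism of line bundles on a curve, hence injective with torsion cokernel of length $\deg\mathcal{M}-\deg\mathcal{N}=0$, i.e.\ an isomorphism. Then $\mathcal{M}\xrightarrow{\varphi^{-1}}\mathcal{N}\hookrightarrow\mathcal{E}$ is a section of $\mathcal{E}\to\mathcal{M}$, so the extension splits and $c=0$, a contradiction. Hence $\mathcal{E}$ is semistable.

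I expect the one point needing care to be the double description of the Harder-Narasimhan filtration of $F^\ast\mathcal{E}$: the crucial observation is that since $\mathcal{N}$ and $\mathcal{E}/\mathcal{N}$ have rank one, their Frobenius pullbacks stay semistable, so that $F^\ast$ of the Harder-Narasimhan filtration of $\mathcal{E}$ is literally the Harder-Narasimhan filtration of $F^\ast\mathcal{E}$; this is exactly what turns the hypothesis $F^\ast(c)=0$ into the numerical identity $\deg\mathcal{N}=\deg\mathcal{M}$. After that, everything is a routine manipulation of slopes and extensions, and I do not anticipate any further difficulty.
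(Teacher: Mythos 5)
Your proof is correct and follows essentially the same route as the paper: argue by contradiction with the maximal destabilizing line subbundle $\mathcal{N}$, compare the two descriptions of the Harder--Narasimhan filtration of $F^\ast\mathcal{E}$ (the pullback of the filtration of $\mathcal{E}$, which stays the HN filtration because its rank-one subquotients remain semistable, versus the splitting coming from $F^\ast(c)=0$), and conclude that the composite $\mathcal{N}\hookrightarrow\mathcal{E}\to\mathcal{M}$ is an isomorphism, forcing the extension to split. The only (harmless) difference is how you see that this composite is nonzero: you use the degree estimate $\deg\mathcal{N}>\mu(\mathcal{E})>\deg\mathcal{L}$, whereas the paper observes that the Frobenius pullback of the composite is the identity.
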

\begin{proof}
Assume to the contrary that $\mathcal{E}$ is unstable and let $\mathcal{N}$ denote the maximal destablizing subbundle $\mathcal{E}$. The maximal destabilising subbundle of $F^\ast \mathcal{E}= F^\ast \mathcal{M} \oplus F^\ast \mathcal{L}$ is $F^\ast \mathcal{M}$.
Since the Harder-Narasimhan filtration is unique and in the rank $2$ case automatically strong we must have $F^\ast \mathcal{M} = F^\ast \mathcal{N}$. Hence, $\mathcal{N} = \mathcal{M} \otimes \mathcal{T}$ for some $p$-torsion bundle $\mathcal{T}$.

Consider now the natural inclusion $i: \mathcal{M} \otimes \mathcal{T} \to \mathcal{E}$ and the projection $p: \mathcal{E} \to \mathcal{M}$. The Frobenius pull-back of the composition $p \circ i$ is the identity. In particular $p \circ i: \mathcal{M} \otimes \mathcal{T} \to \mathcal{M}$ is non-zero. Since both line bundles are of the same degree this map is an isomorphism. Hence, if $\mathcal{E}$ is not semistable then the sequence has to split which contradicts the assumption $c \neq 0$.
\end{proof}

\begin{Bsp}
Let now $p$ be any prime and $k$ an algebraically closed field of characteristic $p$. We consider the plane curve \[X = V_+(x^{3p} + xy^{3p-1} + yz^{3p-1}) \subseteq \mathbb{P}^2_k.\] By the Jacobian criterion this is a smooth curve. We will construct $\lfloor \frac{3p+1}{2} \rfloor$ rank two bundles of slopes $-\frac{3p}{2}$ as in Proposition \ref{HNF}. The direct sum over at least $\frac{p+1}{2}$ of these bundles then constitutes the desired example. 

Consider the cohomology class \[c = \frac{x^3}{y^2z^2} \in H^1(X, \mathcal{O}_X(-1))\] which is non-zero. Also note that its Frobenius pull-back \[F^\ast(c) = \frac{x^{3p}}{y^{2p}z^{2p}} = \frac{-xy^{3p-1} - yz^{3p-1}}{y^{2p}z^{2p}}= -(\frac{xy^{p-1}}{z^{2p}} + \frac{z^{p-1}}{y^{2p-1}})\] is zero. Moreover, multiplication by $z$ yields a map $\mathcal{O}_X(-1) \to \mathcal{O}_X$ and the induced map on cohomology maps $c$ to $\frac{x^4}{y^2z^2}$ which is still non-zero. Let $P_1, \ldots, P_{3p}$ be the (distinct) points on $X$ where $z$ vanishes\footnote{We could also work with multiplication by $x$ which yields one reduced point and one with multiplicity $3p -1$.}. In particular, the cokernel of multiplication by $z$ is just $\bigoplus_{i=1}^{3p} k(P_i)$, where $k(P_i)$ is the skyscraper sheaf at $P_i$.

Multiplication by $z$ factors as \[\begin{xy}\xymatrix{\mathcal{O}_X(-1) \ar[r]& \mathcal{O}_X(-1 + \sum_{i=1}^l P_i) \ar[r]& \mathcal{O}_X}\end{xy}\] for any $l \leq 3p$. Indeed, the image of the line bundle in the middle is just the sum of the image of $\mathcal{O}_X(-1)$ in $\mathcal{O}_X$ and the preimage of $\sum_{i=1}^l k(P_i)$. 
 In particular, we get an induced factorization on cohomology and we denote the image of $c$ in $H^1(X,\mathcal{O}_X(-1 + \sum_{i=1}^l P_i))$ by $c_l$. Note that $c_l$ is non-zero, while $F^\ast(c_l)$ is zero.

Assume now that $l$ is even. These cohomology classes then define extensions $\mathcal{E}_l$ as follows. Let $I$ be the odd numbers from $1$ to $l$ and let $J$ be the even numbers from $1$ to $l$. Then the \[c_l \in H^1(X, \mathcal{O}_X(-1 + \sum_{i=1}^l k(P_i))) = \Ext^1(\mathcal{O}_X(-\sum_{j \in J} P_j), \mathcal{O}_X(-1 + \sum_{i \in I} P_i))\] yield  extensions \[\begin{xy}\xymatrix{0 \ar[r]& \mathcal{O}_X(-1 + \sum_{i \in I} P_i) \ar[r]& \mathcal{E}_l \ar[r] & \mathcal{O}_X(-\sum_{j \in J} P_j) \ar[r]& 0} \end{xy}.\]

The $\mathcal{E}_l$ all have slope $-\frac{3p}{2}$ and pulling back along Frobenius splits the above sequence. By Lemma \ref{Lemma} the $\mathcal{E}_l$ are semistable. Hence, the $\mathcal{E}_l$ satisfy the hypothesis of Proposition \ref{HNF} and we obtain the desired examples.
\end{Bsp}

\bibliography{bibliothek.bib}
\bibliographystyle{amsplain}
\end{document}